\newtheorem{thm}{Theorem}%[section]
\newtheorem{co}[thm]{Corollary}
\newtheorem{lem}[thm]{Lemma}
\newtheorem{pr}[thm]{Proposition}
\newtheorem{definition}[thm]{Definition}
\newenvironment{de}[1]{\begin{definition}[#1]}{\end{definition}}
\newcommand{\dd}{\mathrm{d}}
\newcommand{\D}{\mathbb{D}}
\newcommand{\N}{\mathbb{N}}
\newcommand{\R}{\mathbb{R}}
\newcommand{\Z}{\mathbb{Z}}
\newcommand{\PP}{\mathbb{P}}
\newcommand{\vol}{\mathrm{Vol}\,}
\newcommand{\HNF}{\mathrm{HNF}\,}
\newcommand{\diag}{\mathrm{diag}\,}
\newcommand{\Section}[1]{\section{#1}}
\newcommand{\openbox}{\leavevmode
  \hbox to.77778em{%
    \hfil\vrule
  \vbox to.675em{\hrule width.6em\vfil\hrule}%
  \vrule\hfil}}
\newcommand{\proofname}{Proof}
\newenvironment{proof}[1][\proofname]{\par\normalfont
  \trivlist\item[\hskip\labelsep\itshape #1:]\ignorespaces
  }{\hspace*{1cm}\hspace*{\fill}\openbox \medskip\endtrivlist}
\title{Natural Density Distribution of Hermite Normal Forms of Integer Matrices
  \footnote{To appear in Journal of Number Theory}
}%
\date{February 23, 2011}
\author{G\'erard Maze\\
{\small {\em e-mail:\/} gerard.maze@math.uzh.ch \vspace{-1mm} }\\
{\small Mathematics Institute\vspace{-1mm}}\\
{\small University of Z\"urich\vspace{-1mm}}\\
{\small Winterthurerstr 190, CH-8057 Z\"urich, Switzerland }
\vspace{3mm} }
\begin{document}\maketitle
\thispagestyle{empty}
\begin{abstract}
  The Hermite Normal Form (HNF) is a canonical representation of
  matrices over any principal ideal domain. Over the integers, the
  distribution of the HNFs of randomly looking matrices is far from
  uniform. The aim of this article is to present an explicit
  computation of this distribution together with some
  applications. More precisely, for integer matrices whose entries are
  upper bounded in absolute value by a large bound, we compute the
  asymptotic number of such matrices whose HNF has a prescribed
  diagonal structure. We apply these results to the analysis of some
  procedures and algorithms whose dynamics depend on the HNF of
  randomly looking integer matrices.
\end{abstract}

\vspace{3mm}
\noindent{\bf Key Words:} Natural density, Hermite normal form,
integer lattices.\\
\noindent{\bf Subject Classification:} 15A21, 05A16, 52C07, 15B36.
\vspace{3mm}

%%%%%%%%%%%%%%%%%%%%%%%%%%%%%%%%%%%%%%%%%%%%%%%%%%%%%%%%%%%%%%%%%%%
\Section{Introduction}

Given a principal ideal domain $R$, the notion of Hermite Normal Form
(HNF) of a $n \times m$ matrix with entries in $R$ is well
defined. When $R=\Z$, which will be the case in this article, a matrix
in HNF can be defined as follows, see e.g. \cite{cohen, macduffee}:

\begin{definition}[Hermite Normal Form (HNF)]
  A $n \times m$ matrix $H$ with integer entries is in Hermite normal
  form if $H$ is upper triangular with the following properties:
  \begin{enumerate}
  \item The first $r$ rows of $H$ are the non-zero rows of $H$,
  \item for each row $i$, if $h_{ij_i}$ is its first non-zero entry,
    then  $h_{ij_i}>0$ and $j_1<j_2<...<j_r$,
  \item for each $1 \leq k < i \leq r$, the entries $h_{kj_i}$ of the
    $j_i^{th}$ column of $H$ satisfy $0 \leq h_{kj_i} < h_{ij_i}$.
  \end{enumerate}
  The positive integers $h_{ij_i}$ are called the pivot of the matrix
  in HNF.
\end{definition}
A row matrix is,  up to a sign, already in HNF.  We will assume in the
sequel that $n$ is at least  2.  The main result about HNF, discovered
by Charles Hermite,  is that for all $n \times  m$ integer matrix $A$,
there exists  a (possibly non-unique)  unimodular $n \times  n$ matrix
$U$ (i.e., $U\in  \mbox{GL}_n(\Z)$) and a unique $n  \times m$ integer
matrix $H$ in HNF such  that $A=UH$.  The left equivalence between $A$
and $H$  means that there is  a sequence of  elementary row operations
that will produce $H$ when applied to $A$. Note that the definition of
the HNF can  slightly change in the literature  (e.g. lower triangular
vs. upper triangular, column operations vs. row operations). Since the
matrix  $H$  is  uniquely  defined,  we can  write  without  ambiguity
$H=\HNF(A)$.  Typically, the  shape of  a matrix  in HNF  will  be the
following:

\begin{center}
\begin{tabular}{ccccccccc}
* & * & * & * & * & * & * & * & *\\
0 & * & * & * & * & * & * & * & *\\
0 & 0 & 0 & * & * & * & * & * & *\\
0 & 0 & 0 & 0 & * & * & * & * & *\\
0 & 0 & 0 & 0 & 0 & 0 & 0 & * & *\\
0 & 0 & 0 & 0 & 0 & 0 & 0 & 0 & *\\
0 & 0 & 0 & 0 & 0 & 0 & 0 & 0 & 0\\
0 & 0 & 0 & 0 & 0 & 0 & 0 & 0 & 0\\
\end{tabular}
\end{center}
\vspace{3mm}

The above example is given with $(n,m,r)=(8,9,6)$ and the sequence
$j_i$ of column positions of the pivots is $1,2,4,5,8,9$. As a matter
of fact, only a very small proportion of HNFs of integer matrices has
this type of shape. Anyone who had to compute the HNF of arbitrary
integer matrices more than once was forced to observe that they do not
appear ``randomly'', that is, the elements $h_{ij_i}$ do not seem to
follow an equiprobable law of distribution. For instance, the case
$j_i=i$ and $r=\min(n,m)$ appears predominantly, and a strongly
recurring structure is that all the pivots $h_{ij_i}$ with $i<r$ are
small and increasing with $i$ (typically less than 10, even for
matrices with very large entries) and the last pivot $h_{ij_r}$ is
large (of the order of $\det A$ when $n=m$). This particular point is
intrinsically interesting, but was also used in several occasions (see
e.g. \cite{buchmann, miccancio, pernet}) in order to heuristically
understand or analyze the behavior of an algorithm.

Our focus in this paper is set on the ``probability'' that the HNF of
a random $n \times m$ integer matrix has a given diagonal. We aim to
obtain an explanation of the strong biases mentioned above. For
instance, Proposition \ref{pr1} below gives the frequency of
appearance of a given non-zero diagonal in the HNF of a randomly
looking matrix. Proposition \ref{pr1} also shows that the density of
HNFs with the above shape is in fact $0$. Corollary \ref{co} shows
that given strictly positive integers $d_1,d_2,...,d_{n-1}$, $n\geq2$,
the ``probability'' that a $n \times n$ integer matrix $A$ has a HNF
of the form
\[
\HNF(A)=
\left[
\begin{array}{ccccc}
d_1 & * & * & * & *\\
0 & d_2 & * & * & *\\
0 & 0 & \ddots & * & *\\
0 & 0 & 0 & d_{n-1} & *\\
0 & 0 & 0 & 0 & d\\
\end{array}
\right]
\]
where $d= \frac{\det(A)}{\prod_{i=1..n-1} d_i}$ is given by
\[
\left( \zeta(n) \cdot \zeta(n-1) ...  \cdot \zeta(2)
\cdot d_1^n \cdot d_2^{n-1}  \cdot \ldots  \cdot d_{n-1}^{2}  \right)^{-1},
\]
where $\zeta$ is the usual zeta function. Of course, the notion of
``probability'' and ``density'' used here have to be made precise. The
appropriate concept is the notion of natural density, see
e.g.~\cite{tenenbaum}. Different definitions of densities appear
naturally in analytic number theory with the study of prime numbers
and expected values of arithmetic functions, see e.g. \cite{tenenbaum}
and \cite{hardy} for several examples. As for the {\it natural
  density}, the explicit multidimensional aspects of the question
appear in e.g. \cite{hetzel,martin,maze} and more implicitly in
e.g. \cite{buchmann, cesaro2, cesaro3,hafner,lehmer}. On the more
specialized study of density of canonical form of matrices, let us
mention the work of Evans \cite{evans} where the density of Smith
normal form over the ring of integers of a local field is studied. The
subject treated in the present article does not seem to have been the
object of a publication in the past.

The article is structured as follows. We address the question of a
suitable definition of natural density in $\Z^k$ in Section
\ref{natural_density} below. In Section \ref{unimodular} we present
some results linking unimodular matrices and natural density of
vectors. The main results of the article are stated and proved in
Section \ref{main} and in Section \ref{application} we present some
applications.

We will used the following notations. The set of primes in $\N^*=\N
\setminus \{0\}$ is $\PP$, Landau's notations $f(x)=o(x)$ and
$g(x)=O(x)$ mean that $\lim_{x \rightarrow \infty} f(x)/x=0$ and
$\limsup_{x \rightarrow \infty} |g(x)/x| < \infty$. The Riemann zeta
function $\zeta$ is $\zeta(s) = \sum_{n\geq 1} n^{-s}=\prod_{p\in\PP}
(1-p^{-s})^{-1}$. The cardinality of a set $S$ is $|S|$. We will also
use the expression ``randomly looking (integer) vector'' in an
informal way, meaning that the entries of the vector have been chosen
uniformly at random in a large interval $\lbrack -B,B\lbrack$. The
symbol $*$ represents an integer whose value is unimportant depending
on the context.

\section{Natural density in $\Z^k$}\label{natural_density}

In order to make the intuitive notion of probability in $\Z^k$ precise
we first remark that the uniform distribution over $\Z^k$ or over
$\N^k$, even when $k=1$, has little meaning. For this reason
researchers often use the concept of {\em natural density} when
stating probability results in $\N$.  In the following we briefly
explain this concept. Let $S\subset \N$ be a set. Define the upper
(respectively lower) natural density as
\[
\overline{\D}(S)=\limsup_{B \rightarrow \infty} \frac{|S \cap
\left[0,B\right[|}{B} \; \; , \;\;
\underline{\D}(S)=\liminf_{B \rightarrow \infty} \frac{|S \cap
\left[0,B\right[|}{B}.
\]
When both limits are equal one defines the natural density of the
set $S$ as
\[
  \D(S):=\overline{\D}(S)=\underline{\D}(S).
\]
The notion of natural density allows to tackle questions related to
the frequency of realization of events concerning randomly looking
integers, i.e., for uniformly chosen integers in $\lbrack 0,B
\lbrack$, when $B$ goes to infinity. A famous example in $\N$ is that
the natural density of square free integers is $6/\pi^2 =
\zeta(2)^{-1}$, see e.g.  \cite{hardy}. The extension of the above
definition in higher dimension is sometimes implicit in the
literature. For example the natural density of $n$ coprime integers,
equal to $\zeta(n)^{-1}$, has been studied by several authors,
starting with Ces\`aro in 1884 \cite{cesaro3} (1881 for the case $n=2$
\cite{cesaro1,cesaro2}), Lehmer in 1900 \cite{lehmer} and Nymann
\cite{nymann}. For the historical fatherhood of the result see
\cite{maze}. This natural density means that there are
$\zeta(n)^{-1}\cdot B^n + o(B^n)$ $n$-vector in $\lbrack 0,B\lbrack^n$
whose entries are coprime. An explicit definition of a higher
dimensional notion of natural density has been developed in
e.g. \cite{hetzel,martin,maze}. In these articles, the notion of
natural density of a set $S$ in $\Z^k$ is defined as a ``centered
symmetric cube'' version of the unidimensional definition, i.e., as
the limit, when it exists, $D(S) =\lim_{B \rightarrow \infty} \frac{ |
  S \cap \lbrack-B,B\lbrack^k | }{ (2B)^k }$. We will however need a
stronger definition. In order to see why, let us consider a set $S$ in
$\N$ with density $\delta$. Since $| S \cap \lbrack 0,l\lbrack| =
\delta \cdot l + o(l)$, any interval $\lbrack l,l+B \lbrack$ with
$l=o(B)$ contains $\delta \cdot B + o(B)$ elements of $S$. Being able
to estimate the local density in non centered cubes does not seem to
be always possible in dimension $k>1$ with the above definition of
density. In order to achieve this, we require in the definition that
the cubes can lie anywhere in $\Z^k$. In the sequel, we call a cube
any set of the form $\prod_{i=1}^k \lbrack z_i-B,z_i+ B\lbrack^k$ for
some $z \in \Z^ k$ and $B>0$.

\begin{de}{Natural density in $\Z^k$}\label{d1}
Let $S$ be a subset of $\Z^k$. If for all $z \in \Z^k$, the following
limit exists
\[
\D(S) = \lim_{B \rightarrow \infty} \frac{|S \cap
\prod_{i=1}^k \left[z_i-B,z_i+B\right[|}{(2B)^k}
\]
and is independent of $z$, then it is called the natural density of
$S$.
\end{de}

Let us notice that it would have been even possible to extend the
definition of natural density from $\Z$ to $\Z^k$ by using
$k$-rectangles instead of cubes (i.e. different $B_i$ for each
dimension). However both definition are equivalent since rectangles
can be decomposed into smaller cubes. We will not use this property in
the sequel. Another direction of generalization is the spherical
model. This setting considers centered $n$-balls instead of
$n$-cubes. Due to the symmetry of the balls around the origin, it is a
natural choice in the study of different asymptotic results concerning
lattices, integer matrices, and varieties in general see
e.g. \cite{nguyen, sarnak}. This model suffers however from the same
problem as noted before and from the fact that the entries of the
different objects of study are not independent anymore, i.e., the
``random looking aspect'' is somehow lost.

In order to prove our main results, we will need the existence and the
value of the natural density of tuples of integers whose greatest
common divisor is a given positive integer $d$. This is treated in
Lemma \ref{l1} below. As mentioned in the introduction, in the weaker
form of density definition given above, this problem has been studied
by several authors, see e.g. \cite{cesaro3,lehmer}.

\begin{lem}\label{l1}
When $k\geq2$, the set $\{(x_1,\ldots,x_k) \in \Z^k \, : \,
\gcd(x_i)=d\}$ has a density equal to $(\zeta(k)\cdot d^k)^{-1}$.
\end{lem}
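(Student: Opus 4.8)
The plan is to compute this density by inclusion--exclusion over the common divisor of the coordinates, in the spirit of the classical Ces\`aro--Lehmer count of coprime tuples, but carried out over the arbitrarily placed cubes required by Definition \ref{d1}. Fix a center $z\in\Z^k$, write $Q_B=\prod_{i=1}^k[z_i-B,z_i+B[$, and set $C_d=\{x\in\Z^k:\gcd(x_i)=d\}$. The goal is to show $|C_d\cap Q_B|=(2B)^k(\zeta(k)\,d^k)^{-1}+o(B^k)$ with an error that does not depend on $z$, which is exactly the statement that $\D(C_d)=(\zeta(k)\,d^k)^{-1}$.

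First I would count, for each $m\geq1$, the tuples all of whose entries are divisible by $m$: let $g(m)=\#\{x\in Q_B:m\mid x_i\text{ for all }i\}$. Any real interval of length $2B$ contains $2B/m+O(1)$ multiples of $m$ with an absolute implied constant, so taking the product over the $k$ coordinates gives $g(m)=(2B/m)^k+E(m)$ with $|E(m)|\leq(2B/m+1)^k-(2B/m)^k$ whenever $m\leq 2B$; since $z$ is fixed and $B\to\infty$, every modulus that occurs eventually satisfies $m\leq 2B$. A tuple lies in $C_d$ precisely when $d$ divides all its entries while no larger multiple of $d$ does, so Möbius inversion on the extra common factor yields $|C_d\cap Q_B|=\sum_{j\geq1}\mu(j)\,g(dj)+O(B)$, where the finite sum runs over $j$ with $dj\leq\max_i(|z_i|+B)$ and the $O(B)$ absorbs the lone zero tuple, whose Möbius-weighted contribution is a Mertens-type term bounded by its range.

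Substituting the estimate for $g$ separates the sum into a main term and an error term. The main term is $(2B/d)^k\sum_{j\geq1}\mu(j)\,j^{-k}$, and here the hypothesis $k\geq2$ is decisive: the series converges absolutely to $\zeta(k)^{-1}$, and truncating it at $j\asymp B$ discards only a tail of size $(2B/d)^k\cdot O(B^{1-k})=O(B)$. The error term is bounded by $\sum_{m\leq 2B}\big[(2B/m+1)^k-(2B/m)^k\big]\ll\sum_{m\leq 2B}(2B/m)^{k-1}$, which is $O(B^{k-1})$ for $k\geq3$ and $O(B\log B)$ for $k=2$; in every case it is $o(B^k)$. Collecting the three contributions gives $|C_d\cap Q_B|=(2B)^k(\zeta(k)\,d^k)^{-1}+o(B^k)$, and since none of the bounds depended on the location of $z$, the limit defining $\D(C_d)$ exists, is independent of $z$, and equals $(\zeta(k)\,d^k)^{-1}$.

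I expect the real work to lie in the uniform bookkeeping of the error rather than in any single estimate: one must verify that the accumulated per-modulus $O(1)$ discrepancies, the tail of the Möbius series, and the zero-tuple correction all remain $o(B^k)$ \emph{uniformly in the center} $z$, as Definition \ref{d1} demands a $z$-independent value. This is also precisely where $k\geq2$ is needed, since for $k=1$ both $\sum\mu(j)/j$ and the error sum fail to produce the claimed behaviour. As an alternative to treating general $d$ directly, one could establish the case $d=1$ and then transport it through the bijection $x\mapsto dx$, which sends $C_1$ inside a box of side $2B/d$ onto $C_d\cap Q_B$; the only new point is that this rescaled box need not be centered at a lattice point, but the count of $g(m)$ above used real intervals throughout and so applies without change.
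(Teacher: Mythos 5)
Your proof is correct, but it takes a genuinely different route from the paper. The paper proceeds in two reductions: first it rescales by $d$ (via $x\mapsto x/d$) to reduce to the coprime case $d=1$, then it argues that a cube centered at an arbitrary $z$ contains the same number of coprime tuples as the origin-centered cube up to $o(B^k)$ --- by fixing all coordinates but one and comparing, in the two intervals, the count of integers coprime to the gcd of the remaining coordinates --- and finally it cites the classical Ces\`aro--Lehmer--Nymann count $\zeta(k)^{-1}(2B')^k+o((B')^k)$ for centered cubes. You instead do everything in one self-contained pass: M\"obius inversion over the common divisor, carried out directly inside the arbitrarily placed cube $Q_B$, with the elementary estimate that any interval of length $2B$ contains $2B/m+O(1)$ multiples of $m$. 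This buys you two things the paper does not have: explicit error terms ($O(B^{k-1})$ for $k\geq3$, $O(B\log B)$ for $k=2$, plus the $O(B)$ tail and zero-tuple corrections), and automatic independence of the center $z$, since the per-interval $O(1)$ discrepancy is absolute --- whereas the paper's translation step is the most delicate (and most tersely argued) part of its proof. What the paper's route buys is brevity and reuse of known results; note also that your closing alternative (prove $d=1$ and transport through $x\mapsto dx$) is essentially the paper's first reduction. One small remark: your concern about uniformity in $z$ is stronger than what Definition \ref{d1} demands --- it only requires that for each fixed $z$ the limit exists and that the values agree, not that the convergence be uniform --- so that part of your bookkeeping, while harmless and in fact achieved by your bounds, is not strictly necessary.
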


\begin{proof}
Let $x \in \Z^k$ and $S=\{(x_1,\ldots,x_k) \in \Z^k \, : \,
\gcd(x_i)=d\}$. Then $x \in S$ if and only if $x_i/d \in \Z$ and
$\gcd(x_i/d) = 1$. Let $z'_i = z_i/d$, $B'=B/d$ and
$S'=\{(x_1,\ldots,x_k) \in \Z^k \, : \, \gcd(x_i)=1\}$. The first
equality of the following equations is straightforward.
\begin{equation}\label{e1}
\left| S \cap
\prod_{i=1}^k \left[z_i-B,z_i+B\right[ \right| = 
\left| S' \cap \prod_{i=1}^k \left[z'_i-B',z'_i+B'\right[ \right| =
    \left| S' \cap \prod_{i=1}^k \left[-B',B'\right[ \right| + o(B^k).
\end{equation}
In order to prove that the second equality of Eq. (\ref{e1}) is
valid, consider an element $(x_1,\ldots,x_k)$ in the set of the left
hand side of the equality. Let us fix all the components but the
$i^{th}$ one, and consider $t=\gcd_{j\neq i}(x_j)$. The integers
$x_1,\ldots,x_k$ are coprime if and only if $x_i$ has no common factor
with $t$. So if $P$ is the set of prime divisors of $t$, both the
interval $\lbrack z'_i-B',z'_i+B'\lbrack$ and $\lbrack -B',B'\lbrack$
contains $ 2B' \prod_{p\in P}\left( 1-\frac{1}{p}\right) +o(2B')$
integer coprime to the fixed $x_j$. This shows that the error
resulting in setting $z_i=0$ in the mid term of Eq. (\ref{e1})
can be adjusted by $o(B')=o(B)$. Taking into account the effect of all
dimensions together leads to the correction term $o(B^k)$. Now, as
mentioned before, see e.g.  \cite{lehmer,maze}, we have
\[
\left| S' \cap \prod_{i=1}^k \lbrack-B',B' \lbrack \right| = \zeta(k)
^{-1} (2B')^k + o((B')^k)
\]
which leads to
\[
\D(S) = \lim_{B \rightarrow \infty} \frac{|S \cap \prod_{i=1}^k
  \lbrack z_i-B,z_i+B\lbrack |}{(2B)^k} = \lim_{B \rightarrow \infty}
\frac{ \zeta(k)^{-1}(2B')^k + o((B')^k)}{(2B)^k} = (\zeta(k)\cdot
d^k)^{-1}
\]
\end{proof}

\section{Generalities on unimodular matrices}  \label{unimodular}

Recall that a $n \times n$ matrix $U$ with coefficient in $\Z$ is
unimodular if its determinant is $\pm 1$. Unimodular matrices play a
special role with respect to sets with densities as shown in the next
lemma:

\begin{lem}\label{l2}
Let $S \subset \Z^n$ be a set with density $\delta>0$ and let $V$ be a
$n \times n$ unimodular matrix. Then $V(S) = \left\{ Vx \, : \, x\in
S\right\}$ has a density equal to $\delta$.
\end{lem}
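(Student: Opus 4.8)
The plan is to use that a unimodular $V$ is a volume-preserving bijection of the lattice in order to convert the count of $V(S)$ in a cube into the count of $S$ in a parallelepiped of the same volume, and then to evaluate the latter by tiling. First I would fix $z\in\Z^n$ and $B>0$ and write the cube $C=\prod_{i=1}^n\left[z_i-B,z_i+B\right[=z+K$ with $K=\prod_{i=1}^n\left[-B,B\right[$. Since $V\in\mathrm{GL}_n(\Z)$, the map $x\mapsto Vx$ is a bijection of $\Z^n$ whose inverse is again integral, so $|V(S)\cap C|=|S\cap V^{-1}(C)|$. Here $V^{-1}(C)=V^{-1}z+V^{-1}(K)$ is a parallelepiped $P$ with $V^{-1}z\in\Z^n$ and $\vol(P)=|\det V^{-1}|\,(2B)^n=(2B)^n$. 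Thus the lemma reduces to the estimate $|S\cap P|=\delta(2B)^n+o(B^n)$.

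Next I would estimate $|S\cap P|$ by an inner/outer tiling. Choosing an intermediate scale $b=b(B)$ with $b\to\infty$ and $b=o(B)$ (for instance $b=\lfloor\sqrt{B}\rfloor$), I would partition $\R^n$ into a grid of cubes of side $2b$ and let $P^-$ (respectively $P^+$) be the union of those grid cubes contained in (respectively meeting) $P$. Because $\partial P$ is a finite union of facets it is Jordan-null, and the boundary layer $P^+\setminus P^-$ has volume $O(bB^{n-1})=o(B^n)$, hence contains $o(B^n)$ lattice points; consequently $|S\cap P^-|\le|S\cap P|\le|S\cap P^+|$ with $|S\cap P^\pm|$ differing from $|S\cap P|$ by $o(B^n)$. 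On each grid cube $c$ the density of $S$ gives $|S\cap c|=\delta(2b)^n+o(b^n)$; summing over the $O((B/b)^n)$ cubes composing $P^-$ and $P^+$, and using $\vol(P^\pm)=(2B)^n+o(B^n)$, should pinch $|S\cap P|$ to $\delta(2B)^n+o(B^n)$, which is the claim.

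The hard part will be the \emph{uniformity} of the local estimate $|S\cap c|=\delta(2b)^n+o(b^n)$ across the tiling. The grid cubes $c$ have side $2b\ll B$, yet their centers sweep out a region of diameter $\sim B$, so the $o(b^n)$ error must be controlled uniformly in the position of $c$ for the sum of the $\sim(B/b)^n$ errors to stay $o(B^n)$. This is exactly the feature for which Definition \ref{d1} was strengthened to allow cubes lying anywhere in $\Z^n$: I would read it as furnishing, for every family of cubes whose side tends to infinity, a count equal to $\delta$ times the volume up to a uniform $o(\cdot)$, and this uniform counting is the analytic core of the argument. The two remaining ingredients, namely that unimodularity reduces the problem to counting in a single parallelepiped of volume $(2B)^n$ and that this parallelepiped has a Jordan-null boundary so inner and outer cube approximations pinch its volume, are structural and routine.
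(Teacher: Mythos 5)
Your proposal is correct and follows essentially the same route as the paper: reduce $|V(S)\cap C|$ to $|S\cap V^{-1}(C)|$ via the unimodular bijection, tile the resulting parallelepiped of volume $(2B)^n$ by cubes of an intermediate scale ($b\to\infty$, $b=o(B)$; the paper takes $B_0=\ln B$), apply the density cube-by-cube, and absorb the boundary layer of volume $O(bB^{n-1})$ into the error. The only differences are cosmetic --- you pinch with inner/outer (Jordan-type) tilings where the paper uses a single cover plus a border-volume correction, and you state explicitly the uniform-in-position reading of Definition \ref{d1} (the $o(b^n)$ error holding uniformly over the positions of the tiles) that the paper's step ``each of the cubes of side length $B_0$ contains $\delta B_0^n+o(B_0^n)$ points of $S$'' uses implicitly.
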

\begin{proof}
Given a cube $\sigma = \prod_{i=1}^n \lbrack z_i-B,z_i+B\lbrack$ in
$\Z^n$, let us count the number of points of the set $V(S)$ that lie
inside $\sigma$. Since $V$ is a bijection, this number is exactly the
number of elements of $S$ inside $V^{-1}(\sigma)$. The map $V^{-1}$ is
linear, and thus $V^{-1}(\sigma)$ is a $n$ dimensional parallelepiped
whose boundary $\partial V^{-1}(\sigma)$ is a union of parallelepipeds
of dimension $n-1$. Let us cover $V^{-1}(\sigma)$ with a disjoint
union of $N$ cubes of side length $B_0$ with $B_0=o(B)$, where $B_0$
is an unbounded function of $B$, e.g. $B_0 = \ln(B)$.
\begin{figure}[htbp]
  \begin{center}
    \includegraphics[width=8cm]{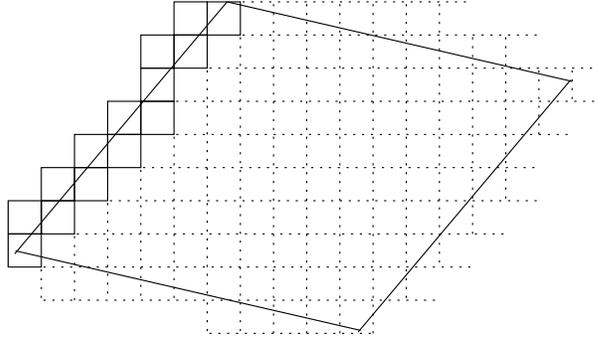}
    \caption{The border effect on  $V^{-1}(\sigma)$}
  \end{center}
\end{figure}
We would like to have an estimation for $N$. Since $V$ is unimodular,
the volume of $V^{-1}(\sigma)$ is $(2B)^n$ (in terms of Lebesgue
measure). The disjoint union of cubes of side length $B_0$ meeting the
border has a total volume which behaves like $O(B_0B^{n-1})$. Taking
into account this border effect, we therefore have (see Figure 1)
\[
N= \frac{(2B)^n + O(B_0B^{n-1})}{B_0^n}.
\]
Each of the cubes of side length $B_0$ contains $\delta B_0^n +
o(B_0^n)$ points of $S$. Therefore,
\begin{eqnarray*}
|S \cap V^{-1}(\sigma)| & = & N \cdot (\delta B_0^n + o(B_0^n)) 
+  O(B_0B^{n-1}) \\
 & = & \delta (2B)^n + O(B_0B^{n-1}) +  \left( (2B)^n +
  O(B_0B^{n-1}) \right)  \cdot  \frac{o(B_0^n)}{B_0^n}.
\end{eqnarray*}
Finally, using the conditions on $B_0$, we see that 
\[
\lim_{B \rightarrow \infty} \frac{|V(S) \cap \sigma|}{(2B)^n} =
\lim_{B \rightarrow \infty} \frac{|S \cap V^{-1}(\sigma)|}{(2B)^n} = \delta
\] 
which finishes the proof of the lemma.
\end{proof}

The previous lemma can be used to prove that unimodular matrices keep
invariant the density of vectors with entries of given greatest common
divisor. More precisely, we have the following proposition.

\begin{pr}\label{pr2}
Let $U \in \Z^{n \times n}$ be a unimodular matrix and $d
\in \N^*$. Then, for any integer $2\leq k \leq n$,  
\[
\D(x \in \Z^n \, : \, \gcd((Ux)_i\, : \, i=n-k+1,\ldots,n))=d) =
(\zeta(k)\cdot d^k)^{-1}.
\] 
\end{pr}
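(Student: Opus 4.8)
The plan is to reduce the statement to the two results already established, namely Lemma \ref{l1} (density of $k$-tuples with prescribed gcd) and Lemma \ref{l2} (invariance of density under unimodular maps). First I would introduce the set
\[
S=\{y\in\Z^n \, : \, \gcd(y_i \, : \, i=n-k+1,\ldots,n)=d\}
\]
and observe that the set $T$ whose density we want satisfies $x\in T \iff Ux\in S$, that is, $T=U^{-1}(S)$. Since $U$ is unimodular, so is $U^{-1}$: its inverse equals $\pm\,\mathrm{adj}(U)$, which has integer entries and determinant $\pm1$. Hence, provided $S$ has a positive density $\delta$, Lemma \ref{l2} applied to $V=U^{-1}$ gives $\D(T)=\D(U^{-1}(S))=\delta$. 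The whole problem thus reduces to computing $\D(S)$.

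Second, I would compute $\D(S)$ by slicing along the free coordinates. The defining condition of $S$ constrains only the last $k$ entries, leaving the first $n-k$ unconstrained. Fix a cube $\sigma=\prod_{i=1}^n\lbrack z_i-B,z_i+B\lbrack$. For each of the $(2B)^{n-k}$ choices of the first $n-k$ coordinates, the last $k$ coordinates range over the $k$-cube $\prod_{i=n-k+1}^n\lbrack z_i-B,z_i+B\lbrack$, and the number of such tuples with gcd equal to $d$ is $(\zeta(k)\cdot d^k)^{-1}(2B)^k+o(B^k)$ by Lemma \ref{l1} (which applies since $k\geq2$). Summing over the slices and dividing by $(2B)^n$ yields $\D(S)=(\zeta(k)\cdot d^k)^{-1}$, which in particular is strictly positive, so the hypothesis of Lemma \ref{l2} invoked above is met. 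Combining the two steps gives the claimed value.

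The one point that deserves care --- and precisely the reason the stronger density notion of Definition \ref{d1} was set up --- is that the per-slice count coming from Lemma \ref{l1} must hold \emph{uniformly} in the center $z$, so that aggregating the $o(B^k)$ errors over the $(2B)^{n-k}$ slices still produces an overall error of genuine order $o(B^n)$ rather than something that accumulates out of control. Because Lemma \ref{l1} was proved for cubes centered anywhere in $\Z^k$, this uniformity is exactly what is available, and the bookkeeping of the error terms goes through. This is the only genuine obstacle in the argument; the remaining manipulations are routine.
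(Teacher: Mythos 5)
Your proof is correct and takes essentially the same route as the paper: the paper likewise introduces the set $S$ of vectors whose last $k$ coordinates have gcd equal to $d$, asserts via Lemma \ref{l1} that $\D(S)=(\zeta(k)\cdot d^k)^{-1}$ (the slicing computation you spell out is exactly the step the paper leaves as ``one readily checks''), and then applies Lemma \ref{l2} with $V=U^{-1}$ precisely as you do. Your extra remarks --- that the positivity hypothesis of Lemma \ref{l2} is satisfied, and that the error bookkeeping works because Lemma \ref{l1} holds for cubes centered anywhere --- simply make explicit what the paper keeps implicit.
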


\begin{proof}
Consider $S=\{(y_1,\ldots,y_n) \in \Z^n \, : \, \gcd(y_i \, : \,
i=n-k+1,\ldots n)=d\}$. Because of Lemma \ref{l1} above, one readily
checks that the set $S$ has a density equal to $(\zeta(k)\cdot
d^k)^{-1}$. The result follows by applying Lemma \ref{l2} with $U=V^{-1}$ to
$S$ since 
\[
\{ x \in \Z^n \, : \, \gcd((Ux)_i\, : \,
i=n-k+1,\ldots,n))=d\} = \{  x \in \Z^n \, : \, Ux \in S \} = V(S).
\]
\end{proof}

\section{Distribution of Hermite normal forms}\label{main}

We start this section by noticing that the pivots $h_{ij_i}$ of the HNF
of a matrix $A$ are determined by the greatest common divisor of the
$i \times i$ minors of the matrix that consists in the $j_l^{th}$
columns of $A$, for $l=1,\ldots,i$. This is true because these
$\gcd$'s are left invariant when $A$ is multiplied on the left by any
unimodular matrix, and because the $\gcd$ of the $i \times i$ minors
of the matrix that consists in the $j_l^{th}$ columns of the HNF of
$A$, for $l=1,\ldots,i$, is precisely equal to $\prod_{l=1,\ldots,i}
h_{ij_i}$ (all the other determinants are zero due to the shape of the
HNF of $A$). Note that when $j_i = i$ the above minors are simply the
$i \times i$ minors of the first $i$ columns of $A$.

This property can be used as a basis of a basic algorithm to compute
the HNF of $A$. We start by computing the greatest common divisor
$h_1$ of the entries of the first non-zero column of $A$. Using the
extended Euclidean algorithm we can express $h_1$ as a linear
combination of the entries of the column. In a matrix form, this means
that there exists a sequence of row operations, i.e., there exists a
unimodular matrix $U_1$, such that the first non-zero column of $U_1 A$
is $\lbrack h_1, 0, \ldots, 0\rbrack^t$. This process can be repeated
recursively as follows. There exists a unimodular matrix $U_k$ such
that the first $k$ columns of $U_k A$ form a matrix in HNF, as follows:
\begin{equation}\label{eq1}
U_k A = \left[
\begin{array}{cccccccc}
* &  \cdots & * & * & * & * & * & *\\
0 &  \ddots & * & * & * & * & * & *\\
 &  0    & * & * & * & * & * & *\\
 &      & 0 & x_1 & * & * & * & *\\
 & & \vdots & \vdots &  &  &  & \vdots\\
 &       & 0 & x_s & * & * & * & *\\
\end{array}
\right].
\end{equation}

Let $\lbrack *,\ldots,*, x_1, \ldots, x_s\rbrack^t$ be the $(k+1)$-th
column of $U_k A$.  If $x_i=0$, $i=1,\ldots,s$, then this column is
disregarded. The next column for which one of the $x_i$ is non-zero is
selected. Using the previous remark, the next pivot $h_{ij_i}$ is
given by $h_{ij_i} = \gcd (x_i)$, and using elementary row operations,
there exists a unimodular matrix $U_{k+1}$ such that the corresponding
column of $U_{k+1} A$ is $\lbrack *,\ldots,*, h_{ij_i}, 0, \ldots,
0\rbrack^t$. Appropriate elementary row operations can modify
$U_{k+1}$ and force the $*$ elements of the column to satisfy the
conditions of the HNF, i.e., to belongs to $\lbrack 0, h_{ij_i}
\lbrack$. At the end of the process, the resulting matrix is clearly
in HNF and must therefore be $\HNF(A)$. This algorithmic approach will
be useful in the proof of Proposition \ref{pr1} below. The key point
is that we can construct the HNF of $A$ column after column, from
left to right, via a sequence of left multiplications by unimodular
matrices.

In order to simplify the statement of our results, let us use the
following notation. For any $n \times m$ matrix $A$, the diagonal
$\diag(A)$ of $A$ is the list of elements
$(a_{ii})_{i=1,\ldots,\min(n,m)}$. For given $n$ and $m$, if
$d_1,d_2,...,d_k$ are integer, we write
\[
\Delta_{d_1,d_2,...,d_k} = \left\{A  \in \Z^{n \times m} \, : \,
  \diag(\HNF(A))=(d_1,...,d_k,*,...,*) \right\}
\]
whenever $k\leq \min(n,m)$.

\begin{pr}\label{pr1}
Let $n,m,k$ be positive integers and let $d_1,d_2,...,d_k \in \N$ .
\begin{enumerate}
\item Suppose $n,m,k$ satisfy $k\leq m$ if $m<n$ and $k< n$
otherwise. If $d_k=0$,
\[
\D\left( \Delta_{d_1,d_2,...,d_k} \right)= 0.
\]
If $d_i\neq 0, \forall i=1,...,k$, then
\[
\D\left( \Delta_{d_1,d_2,...,d_k} \right)=
\left( \zeta(n) \cdot \zeta(n-1) ...  \cdot \zeta(n-k+1)
\cdot d_1^n \cdot d_2^{n-1}  \cdot \ldots  \cdot d_k^{n-k+1}  \right)^{-1}.
\]
\item Suppose $n\leq m$ and let $0\leq r<d  \in \N$. Then
\[
\D\left( A \in \Delta_{d_1,d_2,...,d_{n-1},a} \, : \, a \equiv r \mod
d \right)= \frac{1}{d} \cdot \D\left( \Delta_{d_1,d_2,...,d_{n-1}} \right).
\]
\end{enumerate}
\end{pr}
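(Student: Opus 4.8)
The plan is to handle both parts by converting the diagonal condition into a condition on greatest common divisors of minors, and then processing the columns of $A$ from left to right exactly as in the algorithmic description preceding the statement. First I would reduce to the relevant columns. Since $\Delta_{d_1,\ldots,d_k}$ is cut out by conditions involving only the first $k$ columns of $A$, it is a product $\Delta'\times\Z^{n(m-k)}$ with $\Delta'\subset\Z^{nk}$, and because a cube in $\Z^{nm}$ factors as a product of cubes, $\D(\Delta_{d_1,\ldots,d_k})=\D(\Delta')$; so it suffices to work with the first $k$ columns $c_1,\ldots,c_k$ in $\Z^{nk}$. Using the remark opening Section \ref{main}, I would then restate the event: writing $g_i(A)$ for the $\gcd$ of the $i\times i$ minors of the first $i$ columns, the requirement $\diag(\HNF(A))=(d_1,\ldots,d_k,\ast,\ldots)$ with all $d_i\neq0$ is equivalent to $g_i(A)=d_1\cdots d_i$ for $i=1,\ldots,k$, the nonvanishing of $g_i$ forcing the pivot columns to be $j_i=i$ so that the diagonal entries coincide with the pivots.

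For Part 1 (nonzero case) I would induct on $k$, the base case $k=1$ being $\gcd(c_1)=d_1$ with density $(\zeta(n)d_1^n)^{-1}$ by Lemma \ref{l1}. For the inductive step I fix the first $k-1$ columns in the good set and reduce them to HNF by a unimodular $U=U(c_1,\ldots,c_{k-1})$ as in Eq.~(\ref{eq1}); in $UA$ the first $k-1$ columns are upper triangular with pivots $d_1,\ldots,d_{k-1}$, and a cofactor expansion shows the $k\times k$ minors of the first $k$ columns are $d_1\cdots d_{k-1}\,(Uc_k)_i$ for $i=k,\ldots,n$, so the condition $g_k=d_1\cdots d_k$ becomes $\gcd\bigl((Uc_k)_i:i=k,\ldots,n\bigr)=d_k$. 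Since the hypotheses on $(n,m,k)$ give $n-k+1\geq2$, Proposition \ref{pr2} shows that in each such fiber the admissible last columns $c_k$ have density $(\zeta(n-k+1)\,d_k^{\,n-k+1})^{-1}$, a value independent of the fiber; combining this conditional density with the inductive density of the first $k-1$ columns yields
\[
\prod_{i=1}^{k}\bigl(\zeta(n-i+1)\,d_i^{\,n-i+1}\bigr)^{-1}
=\bigl(\zeta(n)\cdots\zeta(n-k+1)\,d_1^{\,n}\cdots d_k^{\,n-k+1}\bigr)^{-1}.
\]
The degenerate case $d_k=0$ I would dispatch separately: after the same reduction, $d_k=0$ means the pivot shifts ($j_k>k$), forcing $(Uc_k)_i=0$ for all $i\geq k$, i.e.\ the last $n-k+1\geq2$ coordinates of $Uc_k$ vanish, a codimension-$\geq2$ condition of density $0$ in each fiber (and a nonzero diagonal entry following a zero one is impossible, so those sets are empty).

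For Part 2, with $n\leq m$, the point is that $k=n$ is exactly the case excluded from Part 1: after reducing the first $n-1$ columns, the single $n\times n$ minor of the first $n$ columns of $UA$ equals $d_1\cdots d_{n-1}\,(Uc_n)_n$, so the last pivot is $a=|(Uc_n)_n|$, a single coordinate, where the naive formula would involve the divergent $\zeta(1)$. The key observation is that $x\mapsto(Ux)_n$ is the linear form given by the last row of $U$, which is a primitive vector because $U$ is unimodular (its entries' $\gcd$ divides $\det U=\pm1$). Hence as $c_n$ ranges over a cube, $(Uc_n)_n\bmod d$ is equidistributed over $\Z/d\Z$; passing to the absolute value and accounting for the two signs shows that, inside the fiber over fixed good $(c_1,\ldots,c_{n-1})$, the last columns with $a\equiv r\bmod d$ have relative density $\tfrac1d$ for every $0\leq r<d$. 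Integrating this uniform conditional factor against $\D(\Delta_{d_1,\ldots,d_{n-1}})$ gives the stated identity.

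The main obstacle in both parts is not the per-fiber computation but the passage from conditional (fiberwise) densities to the global density, a Fubini-type step that is genuinely delicate because natural density is not a measure. Concretely, Proposition \ref{pr2} yields for each fixed $U$ only an error $o(|C_k|)$, and I must guarantee that these errors do not accumulate when summed over the growing number of fibers inside a cube. This is precisely why Definition \ref{d1} insists on cubes placed anywhere in $\Z^k$: following the proof of Lemma \ref{l2}, one tiles the domain by sub-cubes of side $B_0=o(B)$ with $B_0\to\infty$, on each of which the relevant $\gcd$-densities hold with controlled error, and then sums. Making this tiling estimate uniform over the varying reductions $U$ is the step I expect to require the most care.
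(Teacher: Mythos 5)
Your proposal follows essentially the same route as the paper's own proof: induction on $k$ with the base case from Lemma \ref{l1}, an inductive step that fixes the first $k-1$ columns, reduces them by a unimodular $U$, and counts admissible $k$-th columns fiberwise via Proposition \ref{pr2} (including the same dimension-count for $d_k=0$), and, for Part 2, equidistribution modulo $d$ of the primitive linear form given by the last row of $U$. The Fubini-type uniformity issue you flag is genuine but is in fact glossed over by the paper itself, which simply multiplies the fiber counts; likewise your handling of the sign/absolute-value point in Part 2 is, if anything, more careful than the paper's.
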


The first point of the previous proposition clearly shows that the
example of HNF given in the introduction (with a 0 in the diagonal)
will only rarely appear. The powers $d_i^{n+1-i}$ appearing in the
expression of the density explain the decreasing expectation to see a
randomly looking $n \times m$ matrix having elements on the top of the
diagonal of its HNF larger than 1. Let us now prove the proposition.

\begin{proof}
We prove by induction on $k$ that the expressions for the density are
valid. For $k=1$, we know that $d_1$ is the greatest common divisor of
the entries of the first column vector. If $d_1=0$, there is only one
possibility for this column, giving a density equal to 0, and when
$d_1\neq 0$, the claim is the result given by Proposition \ref{pr2}
above, when $U$ is the identity and with $k=n$. The induction step is
as follows. For a bound $B$, the number of $n \times (k-1)$ matrices
$A'$ with entries in a cube of side length $2B$ such that
$\diag(\HNF(A'))=(d_1,...,d_{k-1})$ with $d_i\neq 0$, $i=1,\ldots,k-1$
is
\[
(2B)^{n(k-1)} \cdot \left( \zeta(n) \cdot \zeta(n-1) ...  \cdot \zeta(n-k+2)
\cdot d_1^n \cdot d_2^{n-1}  \cdot \ldots  \cdot d_{k-1}^{n-k+2}  \right)^{-1}
+ o(B^{n(k-1)}).
\]
For each of these matrices, there  exists an $n  \times n$ unimodular
matrix  $U$ such  that $UA'$  is  upper triangular  and $\diag(UA')  =
(d_1,...,d_{k-1})$. If $v$ is a vector in $\Z^n$, then 
\[
U \lbrack A'|v \rbrack = \left[
\begin{array}{cccc}
d_1 &  \cdots & * & (Uv)_1\\
0 &  \ddots & * & \vdots\\
 &  0    & d_{k-1} & (Uv)_{k-1}\\
 &      & 0 & (Uv)_k\\
 & & \vdots & \vdots\\
 &       & 0 & (Uv)_n\\
\end{array}
\right].
\]
Based on the algorithmic description of the HNF given earlier, the
diagonal of the HNF of $\lbrack A'|v\rbrack$ is
$(d_1,...,d_{k-1},d_k)$, with $d_k=\gcd((Uv)_i\, : \,
i=k,\ldots,n))$. If $d_k=0$, we have $v = U^{-1}\lbrack *,\ldots,*, 0,
\ldots, 0 \rbrack^t$, i.e., there are only $(2B)^{k-1}$ such $v$ in
any cube $\sigma$ of side length $2B$ and dimension $n$, independently
from $U$.  Since $n>k$, this implies that $\D\left(
\Delta_{d_1,d_2,...,0} \right)= 0$. Suppose now $d_k\neq 0$. Using
Proposition \ref{pr2}, we see that for each $A'$, there are $(2B)^n
\cdot (\zeta(n-k+1)\cdot d_k^{n-k+1})^{-1} + o(B^n)$ such vectors $v$
in $\sigma$. The number of such matrices $\lbrack A'|v\rbrack$ is
then, up to an error of order $o(B^{n(k-1)+n})$,
\[
(2B)^{n(k-1)+n} \cdot \left( \zeta(n) \cdot ...  \cdot \zeta(n-k+2)
\cdot d_1^n \cdot \ldots  \cdot d_{k-1}^{n-k+2}
\right)^{-1} \cdot \left(\zeta(n-k+1)\cdot
d^{n-k+1}\right)^{-1}.
\]
Since $n(k-1)+n=kn$, the claim is correct. This argument can be
continued as long as Proposition \ref{pr2} can be applied, i.e., until
$k\leq m$ if $m<n$ or $k<n$ otherwise. This finishes the proof of the
first statement of the proposition. 

Let us concentrate now on the second statement. There exists a $n
\times n$ unimodular matrix $U$ such that the first $(n-1)$ column of
$UA$ are in HNF. Clearly, if $a=(UA)_{n,n}$ then $a = \HNF(A)_{n,n}$,
i.e., $a=u \cdot \alpha$, where $u$ is the last row of $U$ and
$\alpha$ is the last column of $A$.  For any cube $\sigma$ of side
length $2B$ and dimension $n$, we want to find the number of
$n$-vectors $\alpha$ in $\sigma$ such that $u \cdot \alpha \equiv a
\mod d$. Since the entries of $u$ are coprime, at least one is coprime
to $d$, say $u_i$. For each $(2B)^{n-1}$ choices of $\alpha_j$ in
$\sigma$, $j\neq i$, there are $\frac{2B}{d}+o(B)$ $\alpha_i \in
\sigma$ such that $u_i\alpha_i \equiv a - \sum_{j\neq i} u_j \alpha_j
\mod d$. In other words, the density of the $\alpha$'s is
$\frac{1}{d}$.  The result follows by applying the same counting
argument as before and by using the previous expression of the density
of $\Delta_{d_1,d_2,...,d_{n-1}}$. This finishes the proof of the
proposition.
\end{proof}

\begin{co}\label{co}
Let $d_1,d_2,...,d_{n-1} \in \N^*$. The natural density of $n \times
n$ integer matrices whose HNF has diagonal $\left(d_1, d_2, \ldots,
d_{n-1}, \frac{\det A}{\prod_{i=1..n-1} d_i}\right)$ is
\[
\left( \zeta(n) \cdot \zeta(n-1) ...  \cdot \zeta(2)
\cdot d_1^n \cdot d_2^{n-1}  \cdot \ldots  \cdot d_{n-1}^{2}  \right)^{-1}.
\]
\end{co}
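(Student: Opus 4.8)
The plan is to read off this Corollary directly from Proposition \ref{pr1}; the only genuine work is to identify the set in question with $\Delta_{d_1,\ldots,d_{n-1}}$.

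First I would recall that for a nonsingular square matrix $A=UH$ with $U$ unimodular and $H=\HNF(A)$, all $n$ pivots lie on the diagonal and are positive, so $\prod_{i=1}^n h_{ii}=|\det H|=|\det A|$. Hence, if $A\in\Delta_{d_1,\ldots,d_{n-1}}$ is nonsingular, its first $n-1$ diagonal entries are $d_1,\ldots,d_{n-1}$ and the last is forced to be $|\det A|/\prod_{i=1}^{n-1}d_i$ (this is what the Corollary writes as $\det A/\prod d_i$, the pivots being positive). In other words, fixing the top $n-1$ diagonal entries of the HNF automatically pins down the last one through the determinant, so the matrices described in the Corollary are precisely those of $\Delta_{d_1,\ldots,d_{n-1}}$, apart from the singular ones. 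Since singular integer matrices satisfy the positive-codimension condition $\det A=0$, they form a density-zero set whose removal does not affect the natural density, and we obtain $\D(\text{Corollary set})=\D(\Delta_{d_1,\ldots,d_{n-1}})$.

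Finally I would invoke the first statement of Proposition \ref{pr1} in the square case $m=n$ with $k=n-1$; this is legitimate since the square-case hypothesis reads $k<n$, which $k=n-1$ satisfies. Substituting $k=n-1$ turns the trailing factor $\zeta(n-k+1)$ into $\zeta(2)$ and the trailing power $d_k^{n-k+1}$ into $d_{n-1}^2$, so the formula becomes
\[
\left(\zeta(n)\cdot\zeta(n-1)\cdots\zeta(2)\cdot d_1^n\cdot d_2^{n-1}\cdots d_{n-1}^2\right)^{-1},
\]
exactly as claimed. The only point requiring care is the first step: confirming that the pivots of a nonsingular square HNF really sit on the diagonal, so that the diagonal product is genuinely $|\det A|$, and that the density-zero singular matrices may be discarded (or folded in, since for them $\det A=0$ matches the vanishing last pivot). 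Once this identification is in place, the Corollary is an immediate reading of Proposition \ref{pr1}.
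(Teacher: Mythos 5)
Your proposal is correct and takes essentially the same route as the paper: the paper offers no separate proof of Corollary \ref{co}, treating it as an immediate specialization of Proposition \ref{pr1} with $m=n$ and $k=n-1$, which is exactly your final step. The extra care you take --- identifying the Corollary's set with $\Delta_{d_1,\ldots,d_{n-1}}$ by noting that the product of the pivots of a nonsingular square HNF is $|\det A|$, and that the singular matrices either have density zero or can be folded in since their last pivot is $0=\det A/\prod_{i} d_i$ --- merely fills in details the paper leaves implicit.
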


A rectangular $n\times m$ integer matrix (with $n\neq m$) is called
unimodular if the greatest common divisor of its full rank minors is
1. The natural density of unimodular rectangular $n\times m$ integer
matrices, say with $n \neq m$, has been computed in \cite{maze}, with
the weak definition of natural density presented in Section
\ref{natural_density}.  Proposition \ref{pr1} allows to extend the
result to the stronger natural density defined in this article. With
the material in hand, the proof is straightforward, since a $n\times
m$ integer matrix with $n>m$ is unimodular if and only if its HNF has
only 1's in the diagonal.

\begin{co}\label{co2}
The set of $n\times m$ unimodular integer matrices, with $n>m$, has a
natural density equal to $\left( \zeta(n) \cdot \zeta(n-1) ...  \cdot
\zeta(n-m+1) \right)^{-1} $.
\end{co}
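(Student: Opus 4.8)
The plan is to reduce the statement to a single application of the first part of Proposition \ref{pr1}. The bridge is the observation recorded just before the corollary: an $n \times m$ integer matrix $A$ with $n>m$ is unimodular if and only if $\HNF(A)$ has only $1$'s on its diagonal, so that the set of unimodular matrices is exactly $\Delta_{1,1,\ldots,1}$ with $m$ ones. I would first make this equivalence precise. Since $\gcd$'s of minors are invariant under left multiplication by elements of $\mathrm{GL}_n(\Z)$, the $\gcd$ of the full-rank ($m\times m$) minors of $A$ equals that of $H=\HNF(A)$. If $A$ has full column rank then the only possible non-zero $m\times m$ minor of the upper triangular $H$ is the product of its $m$ pivots $h_{11}\cdots h_{mm}$ (every other choice of $m$ rows picks up a zero row), whereas if $A$ does not have full column rank all $m\times m$ minors vanish and $A$ is not unimodular. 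Hence $A$ is unimodular precisely when $h_{11}\cdots h_{mm}=1$, and since the pivots are positive integers this forces $\diag(H)=(1,\ldots,1)$; the converse is immediate. This establishes the identification of the set of $n\times m$ unimodular matrices with $\Delta_{1,1,\ldots,1}$.

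It then remains to evaluate $\D(\Delta_{1,1,\ldots,1})$. Because $n>m$ we are in the regime $m<n$ of the first statement of Proposition \ref{pr1}, where the admissible range of the index is $k\leq m$; taking $k=m$ (the boundary value, which is allowed, and for which $n-m+1\geq 2$ so that the underlying application of Proposition \ref{pr2} is legitimate) together with $d_1=\cdots=d_m=1$ yields
\[
\D(\Delta_{1,1,\ldots,1})=\left(\zeta(n)\cdot\zeta(n-1)\cdots\zeta(n-m+1)\cdot 1^n\cdots 1^{\,n-m+1}\right)^{-1}=\left(\zeta(n)\cdots\zeta(n-m+1)\right)^{-1},
\]
which is exactly the asserted value. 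The existence of the density independently of the centering $z$ is already part of the conclusion of Proposition \ref{pr1}, so the requirement of Definition \ref{d1} is met.

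Since Proposition \ref{pr1} does all of the analytic work, the only genuine content here is the combinatorial equivalence between unimodularity and a unit diagonal, together with the verification that $k=m$ lies inside the range of validity of that proposition. I expect the main (and rather minor) obstacle to be stating the minor argument cleanly, in particular handling the possibility that $A$ has deficient column rank: in that case no full-rank minor is non-zero, their $\gcd$ is $0\neq 1$, and such matrices are correctly excluded both from the unimodular set and from $\Delta_{1,1,\ldots,1}$, whose defining diagonal has no zero entry.
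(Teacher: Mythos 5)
Your proposal is correct and follows exactly the paper's route: the paper also reduces the corollary to the equivalence ``$A$ unimodular $\iff$ $\diag(\HNF(A))=(1,\ldots,1)$'' and then applies the first part of Proposition \ref{pr1} with $k=m$ (allowed since $m<n$) and $d_1=\cdots=d_m=1$. The only difference is that the paper treats the equivalence as immediate (via the invariance of gcd's of minors noted at the start of Section \ref{main}), whereas you spell out the minor computation and the rank-deficient case, which is a harmless elaboration of the same argument.
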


\section{Applications}\label{application}

\subsection{Selection of Random Lattices in Cryptology}\label{rl}

In the following, we discuss the consequences of Proposition \ref{pr1}
above to the various shapes of lattice bases that arise in lattice
based cryptology.

An integer lattice $\mathcal{L}$ is a discrete $\Z$-module of
dimension $n$ in $\R^m$ with $\mathcal{L} = \Z b_1 + \ldots + \Z b_n$,
where $b_i\in \Z^m$ and $\vol (\mathcal{L})=\det(\lbrack b_i\cdot
b_j \rbrack_{i,j})^{1/2} \neq 0$. A matrix $B$ whose row vectors $b_i$ are
independent and generate $\mathcal{L}$ is called a basis of the
lattice. Any matrix $B'=UB$ with $U$ unimodular is a basis of
$\mathcal{L}$. We refer the reader to, e.g., \cite[Chapter 3]{nguyen}
and \cite{silverman} for the use of lattices in cryptology. Several
types of lattice bases naturally appear in lattice based
cryptology. Among them, we find the knapsack $n\times (n+1)$ bases
$(a)$, the NTRU $2n \times 2n$ bases $(b)$ and the so-called random
$n\times n$ lattice basis $(c)$.
\[
\begin{array}{ccc}
  \left[
    \begin{array}{cc}
      I_n & x \\
    \end{array}
    \right]
  &
  \left[
    \begin{array}{cc}
      I_n & H_n \\
      0_n & qI_n
    \end{array}
    \right]
    &
  \left[
    \begin{array}{cc}
      I_{n-1} &x \\
      0 &q
    \end{array}
    \right]\\
 (a) & (b) & (c)
\end{array}
\]

A direct consequence of the previous proposition is that the density
of integer matrices $A$ with HNF of the form $(a)$ is 0. The density
of integer matrices $A$ with HNF of the form $(c)$ is given by $\left(
\zeta(n)\cdot \ldots \cdot \zeta(2) \right)^{-1}$. Since $\zeta(n)$
converges rapidly towards 1, the above density converges rather fast
to the limit $\dd$ with $ \dd = \left( \prod_{j=2}^{\infty} \zeta(j)
\right)^{-1} =0.43575707677...$. This translates into the facts that
the random lattice bases of type $(c)$ have a positive density in the
set of lattices with corresponding dimension. The strict positivity of
this density has been know since the work of Goldstein and Mayer
\cite{goldstein} (see also \cite{buchmann} for an elementary
proof). This density being equal to $\dd$, this shows that the process
of selecting random lattice by selecting random row matrices of type
$(c)$ and large determinant $q$ covers almost $44 \%$ of all possible
cases of randomly looking matrices.

In the case of NTRU bases $(b)$, $q=2^s$, where $s$ is a small integer
and Proposition \ref{pr1} suggests that the density of such lattice
bases is roughly equal to $\dd \cdot 2^{-N}$, with
$N=\frac{n^2}{2}s$. Here again, the density is strictly positive, but
much smaller than in the random case $(c)$.

\subsection{Distribution of $\gcd(\det([A|x],\det[A|y]))$}

Using the weak notion of density presented in Section
\ref{natural_density}, Hafner, Sarnak and McCurley have computed the
probability that two randomly looking $n \times n$ matrices are
coprime \cite{hafner}.  The situation where the randomly looking
matrices differs in one column only turns out to be interesting as
well.

Let $A$ be a randomly looking $n\times (n-1)$ integer matrix and $x,y$
be two randomly looking $n$-vectors. The distribution of the greatest
common divisor $g=\gcd(\det([A|x],\det[A|y]))$ has been used in order
to predict the behavior of fast algorithms that compute the HNF of an
integer matrix, see \cite{miccancio,pernet}.  Miccacio and Warinschi
\cite{miccancio} notice that $g$ is ``typically very small for
randomly chosen matrices'', and Pernet and Stein \cite{pernet}, based on
numerical simulation, provide an histogram of the distribution of the
$g$'s. We propose here to exactly compute this distribution based on
the natural density distribution of Proposition \ref{pr1}. Suppose
$\diag(\HNF(A)) = (d_1,\ldots,d_{n-1})$, with $UA = \HNF(A)$, $U$
unimodular. Then
\begin{eqnarray*}
g & = & \gcd(\det([A|x],\det[A|y])) = \gcd(\det([UA|Ux],\det[UA|Uy]))\\
  & = & \prod_{i=1}^{n-1} d_i \cdot \gcd(u \cdot x,u \cdot y)
\end{eqnarray*}
where $u$ is the last row of $U$ and $u \cdot x$ (resp. $u \cdot y$)
is the scalar product of $u$ and $x$ (resp. $y$). Note that since $U$
is unimodular, we have $\gcd(u_i)=1$. The natural distribution of
$\gcd(u \cdot x,u \cdot y)$ in such a case can be computed as
follows. The reader will readily check that for any given modulus $t$,
the distribution of $(u \cdot x \mod t,u \cdot y \mod t)$ is uniform
in $\left(\Z/t\Z\right)^2$. This means that the proportion of pairs
$(u \cdot x,u \cdot y)$ that are divisible by $d$ is $d^{-2}$, and
among them, the proportion of pairs $((u \cdot x)/d,(u \cdot y)/d)$
that are not $(0,0)$ modulo a finite set of prime $P$ is $\prod_{p\in
  P} 1-p^{-2}$.  Since $\gcd(u \cdot x,u \cdot y)=d$ if and only if $d
| u \cdot x$ and $d | u \cdot y$, and for each prime $p$, $p$ cannot
divides $\frac{u \cdot x}{d}$ and $\frac{u \cdot y}{d}$ at the same
time, it appears that the proportion of pairs $(x,y)$ such that this is
true is given by the limit
\[
\frac{1}{d^2} \prod_{p \in \PP} \left(1-p^{-2}\right) = \left(d^2
\zeta(2) \right)^{-1}.
\]
This heuristic approach can be made rigorous by using the methods used in
Section \ref{natural_density} and the localization methods presented
in \cite{maze}. Finally, the natural density $D_n(g)$ of $n\times
(n-1)$ integer matrices $A$ and $n$-vectors $x,y$ such that
$g=\gcd(\det([A|x],\det[A|y]))$ is given by
\begin{equation}\label{dn}
D_n(g)  =  \frac{1}{\zeta(2) \cdot \prod_{k=2}^n \zeta(k)}
\sum_{d_1\cdot \ldots \cdot d_n=g} \frac{1}{d_1^n\cdot
  d_2^{n-1}\cdot\ldots\cdot d_{n-2}^3\cdot d_{n-1}^2 \cdot
  d_{n}^2}.
\end{equation}
If $\sigma_{-k} (g)=\sum_{d|g} d^{-k}$, then using the Dirichlet's
convolution product $*$ of arithmetic functions, we obtain
\[
D_n  =  \frac{1}{\zeta(2) \cdot \prod_{k=2}^n \zeta(k)} \cdot \left(
\sigma_{-n} *\sigma_{-n+1} *  \ldots *\sigma_{-3} *\sigma_{-2} *  \sigma_{-2}
\right) .
\]
Since the Dirichlet series associated to $\sigma_{-k}$ is $\zeta(s+k) \cdot
\zeta(s)$ (see e.g. \cite{tenenbaum}), i.e., $\sum_{g\geq1}
\frac{\sigma_{-k}(g)}{g^s} = \zeta(s) \cdot \zeta(s+k) $, the
Dirichlet series associated to $D_n$ is given by
\[
\sum_{g\geq1} \frac{D_n(g)}{g^s} = \left(\zeta(s)\right)^n \cdot
\frac{\zeta(s+2)}{\zeta(2)} \cdot \prod_{k=2}^n
\frac{\zeta(s+k)}{\zeta(k)} \;, \;\; \Re(s) >1.
\]
If we write $f_n = \zeta(2) \cdot \prod_{k=2}^n \zeta(k) \cdot D_n$,
Equation (\ref{dn}) above shows that the arithmetic function $f_n$ is
multiplicative, i.e., $f_n(gh)=f_n(g)\cdot f_n(h)$ when
$\gcd(g,h)=1$. It is therefore sufficient to compute $f_n(p^\alpha)$
for $p\in \PP$, $\alpha \geq 1$ in order to determine $D_n$
explicitly. Equation (\ref{dn}) with $d_1=p^i$ gives
\[
f_n(p^\alpha) = \sum_{i=0}^{\alpha} \frac{1}{p^{ni}} f_{n-1}(p^{\alpha-i}),
\]
together with $f_1(p^\alpha)=\frac{1}{p^{2\alpha}}$. Based on this
recurrence relation, we can compute $f_n$ for the first value of $n$,
e.g., $f_2(p^{\alpha})=\frac{\alpha+1}{p^{2\alpha}}$ and prove that
$f_n$ converges rapidly to a limit function $f$ that satisfies
\[
f(1)  =  1 \;, \;\;
f(p)  =  \frac{2p-1}{p^2(p-1)} \;, \;\;
f(p^2)  =  \frac{3p^3-p^2-2p+1}{p^4(p-1)^2(p+1)}
\]
and in general for all $\alpha \geq 3$,
\[
f(p^\alpha)  = 
\frac{\alpha+1}{p^{2\alpha}} + \frac{\alpha}{p^{2\alpha+1}} +
\frac{2\alpha-1}{p^{2\alpha+2}} + \frac{3\alpha-3}{p^{2\alpha+3}} +
\frac{5\alpha-7}{p^{2\alpha+4}} +
o\left(\frac{1}{p^{2\alpha+5}}\right).
\]
The first values of 
\[
D(g) = \lim_{n \rightarrow \infty} D_n(g) =
\lim_{n \rightarrow \infty} \left( \zeta(2) \cdot \prod_{k=2}^n
\zeta(k) \right)^{-1} f_n(g) = \frac{\dd}{\zeta(2)} \cdot f(g),
\]
where $\dd$ is the constant defined in Section \ref{rl}, are given
via
\[
f(2) = \frac{3}{4} \, , \,
f(3) = \frac{5}{18} \, , \,
f(4) = \frac{17}{48} \, , \,
f(5) = \frac{9}{100} \, , \,
f(6) = \frac{5}{24} \, , \,
f(7) = \frac{13}{276}.
\]
Numerical simulation showed that already for small dimension $n$, say
$n>5$, the above values of $D$ give very good approximations of the
density $D_n$. We end up this section by noticing that even though the
above remark of Miccancio is true, the expected size of
$\gcd(\det([A|x],\det[A|y]))$ is unbounded. Indeed, the real numbers
$D(g)$, $g\in \N^*$, define a probability distribution on $\N^*$,
i.e., $\sum_{g\in \N^*} D(g) = 1$ and since $D(p) > C/p^2$ for some
$C>0$ and $\sum_{p \in \PP} 1/p = \infty$, the expectation of the
positive integers under this distribution law is $\sum_{g\in \N^*}
gD(g)> \sum_{p \in \PP} C/p = \infty$. Notice that $D(1)=
\frac{\dd}{\zeta(2)} = 0.266014...$ which is not far from $30\%$, as
noted in \cite{miccancio}.

\section{Conclusion}

Numerical experiments indicate that for randomly looking integer
matrices, their Hermite normal forms are not uniformly distributed
among the upper triangular matrices. The frequency of apparition of the
different diagonals is highly structured. In this paper, we explain
this phenomenon, and we exactly compute these frequencies in terms of
natural density. On the way, we define a multidimensional extension
of the usual natural density over $\N$. We use this analysis in order
to shed light on the following two different situations where the
expected form of the HNF of randomly looking matrices play a
role. First, the densities of three types of lattice bases that
naturally appear in lattice based cryptology has been computed.
Second, a probability distribution over the positive integer appearing
in some HNF algorithms has been explicitly evaluated.

\end{document}